\newcommand{\R}{\mathbb{R}}
\newcommand{\dist}{\mathrm{dist}}
\providecommand{\ue}{u_{\epsilon}}
\providecommand{\Linf}{\mathcal{L}_{\infty}}
\providecommand{\Lpos}{\mathcal{L}_{\infty}^+}
\providecommand{\Lneg}{\mathcal{L}_{\infty}^-}
\providecommand{\Rn}{\mathbb{R}^n}
\newcommand{\inte}{\mathrm{int}}
\newcommand{\ol}{\overline}
\newcommand{\wt}{\widetilde}
\newtheorem{thm}{Theorem}[section]
\newtheorem{lem}[thm]{Lemma}
\newtheorem{cor}[thm]{Corollary}
\numberwithin{equation}{section}
\theoremstyle{definition}
\newtheorem{definition}{Definition}[section]
\title{On the comparison principle for a nonlocal infinity Laplacian}
\author{Frida Fejne}
\date{ }
\begin{document}

\maketitle
\thispagestyle{empty}

\begin{abstract}
    In this article, we prove the uniqueness of viscosity solutions to 
    $$
    \Linf u =f  \textup{ in }  \Omega, 
    $$
    where $\Linf$ denotes the nonlocal infinity Laplace operator, $\Omega$ a bounded domain, and $f$ a continuous functions such that $f \leq 0$. Uniqueness is established through a comparison principle.
\end{abstract}

\section{Introduction}
In this work, we consider uniqueness of viscosity solutions to Poisson's equation for the nonlocal infinity Laplacian where the right-hand side is a function that does not change sign. More precisely, we let $\Omega \subset \Rn$ be a bounded domain and consider the problem
\begin{equation}\label{eq:Luf}   
\Linf u =f  \textup{ in }  \Omega, 
\end{equation}
where $f \in C(\Omega)$ is a bounded function such that $f \leq 0$, and $\Linf u$ denotes the nonlocal infinity Laplace operator which is defined in \eqref{eq:infinitylap}.

Related problems have been studied extensively throughout the years. In 1967 in \cite{A1967} Aronsson introduced the fundamental operator known as the infinity Laplacian:
\begin{equation*}
    \Delta_{\infty} u(x) := D^2 u(x) Du(x) \cdot Du(x) =\sum_{i,j=1}^n D^2_{ij}u D_i u D_j u.
\end{equation*}
The equation $\Delta_{\infty} u(x)=0$ is the Euler-Lagrange equation when minimizing the supremum norm of the gradient of a function over a domain $\Omega$ with some boundary data $g$. The solution to this problem is called an infinity harmonic function and when $g$ is Lipschitz continuous, it will also be Lipschitz continuous. Furthermore, the infinity harmonic function with Lipschitz continuous boundary function $g$ will have the least Lipschitz norm among functions with the same boundary data. Therefore this minimization problem is known as the minimal Lipschitz extension problem. In his classic paper \cite{Jensen1993}, from 1993, Jensen proved that the minimizing functions are equivalent to viscosity solutions of the homogeneous infinity Laplace equation and established uniqueness. Many researchers have since then studied and contributed to the theory of infinity harmonic functions. To mention one such contribution, in \cite{BB2001} Barles and Busca presented a new and different proof of Jensen's theorem, which also worked for more general degenerate elliptic equations. For further references on the homogeneous infinity Laplace equation, see, e.g. \cite{AS2010, LM1995, J1998, CEG2001, CGW2007}, but we note that it is very far from a complete list.

Naturally, the infinity Laplace equation with inhomogeneous right-hand side has also been studied. In \cite{LW2008}, Lu and Wang worked with the Dirichlet problem
\begin{align*}  
\begin{cases}
\Delta_{\infty} u = f&  \textup{ in }  \Omega, \\
 u =g &  \textup{ on }  \partial \Omega.
\end{cases}
\end{align*}
 They established the existence and uniqueness of viscosity solutions under the assumption that the right-hand side $f$ is continuous and strictly positive or strictly negative, that is, it stays away from zero. In addition, they also provided a counterexample of the uniqueness when $f$ changes its sign. To our knowledge, the case where $f \geq 0$, is still an open problem. The work of Lu and Wang was continued in \cite{JPR2016} by Juutinen, Parviainen and Rossi. They worked with the gradient constraint problem
\begin{align*}  
\begin{cases}
 \min\{\Delta_{\infty} u(x), |Du(x)|-\chi_D \}=0&  \textup{ in }  \Omega, \\
 u =g&  \textup{ on }  \partial \Omega.
\end{cases}
\end{align*}
Here, $\chi_D$ denotes the characteristic function of a set $D \subset \Omega$.
Under the relatively mild regularity assumption that $\ol{D}=\ol{\inte \, D}$ they proved the uniqueness of viscosity solutions to the above boundary value problem. For more references on the inhomogeneous infinity Laplace equation, see, e.g. \cite{BDM1991,IL2005,Y2010,CP2007}.

 One nonlocal version of a fractional $p$-Laplace problem in a bounded domain $\Omega$ can be defined as
\begin{align} \label{eq:fracomega}
     \begin{cases}
                (-\Delta_p)^{\alpha}u(x)  = 0 & \textup{ in } \Omega, \\
                    u =g & \textup{ on } \partial \Omega, 
     \end{cases}
\end{align}
where
\begin{equation}\label{eq:fraclap}
(-\Delta_p)^{\alpha}u(x) = \lim_{\epsilon \to 0} \int_{\Omega\setminus B_{\epsilon(x)}} \frac{|u(x)-u(y)|^{p-2}(u(x)-u(y))}{|x-y|^{\alpha p}} \ dy,
\end{equation}
and $\alpha \in (0,1]$. Sometimes $n+sp$ is used instead of $\alpha p$ where $n$ is the dimension and $s \in (0,1]$. We call this operator a fractional $p$-Laplace operator. We note that there is no standard definition when discussing this operator and the definitions in the literature varies, see e.g.,  \cite{STD2018} for a different version. 

As $p$ tends to infinity, the limit problem in \eqref{eq:fracomega} becomes
\begin{align} \label{eq:holderinf}
     \begin{cases}
               (L_{\infty})^{\alpha} u(x)=0  & \textup{ in } \Omega, \\
                    u =g & \textup{ on } \partial \Omega, 
     \end{cases}
\end{align}
 where
$$
(L_{\infty})^{\alpha} u(x) := \sup_{y \in \ol{\Omega}} \frac{u(y)-u(x)}{|x-y|^{\alpha}} + \inf_{y \in \ol{\Omega}} \frac{u(y)-u(x)}{|x-y|^{\alpha}},
$$
for $0 < \alpha \leq 1$. This operator is called the Hölder infinity Laplacian. The reason behind this name is that when the boundary function $g$ is Hölder continuous, more specifically $g \in C^{0, \alpha}(\partial \Omega)$, the solution $u_{\infty}$, to \eqref{eq:holderinf}, has the smallest Hölder seminorm among functions with the same boundary data. Thus, the solution is the optimal Hölder extension. This problem and the corresponding non-homogeneous Dirichlet problem was first introduced in \cite{CLM2012} where Chambolle et al. proved the existence of and some regularity results for solutions to $L_{\infty} u = f$. In the case where $f=0$ they derived a representation formula for the solutions and proved the uniqueness through a comparison principle. 

In \cite{LL2012} Lindgren and Lindqvist worked on an eigenvalue problem involving the operator in \eqref{eq:fraclap} and functions with compact support. They also studied the passage to infinity, and after letting $p \to \infty$ the operator in \eqref{eq:fraclap} becomes
\begin{equation} \label{eq:infinitylap}
(\mathcal{L}_{\infty})^{\alpha}u(x) = \sup_{y \in \Rn} \frac{u(y)-u(x)}{|x-y|^{\alpha}} + \inf_{y \in \Rn} \frac{u(y)-u(x)}{|x-y|^{\alpha}},
\end{equation}
for some $\alpha \in (0,1]$. This operator is called a nonlocal infinity Laplacian. However, we note that the definition of the nonlocal infinity Laplacian operator may vary; see, e.g., \cite{BCF2012} for a different version.
In \cite{FL2016} the authors extended the problem in \eqref{eq:fracomega} to include functions with noncompact support and a more general right-hand side. They studied weak and viscosity solutions to
\begin{align*} 
     \begin{cases}
                    - (\mathcal{L}_p)^{\alpha} u(x) = f(x,u) & \textup{ in } \Omega, \\
                    u =g & \textup{ on } \Rn \setminus \Omega, 
     \end{cases}
\end{align*}
where
$$
(\mathcal{L}_p)^{\alpha} u(x) = \lim_{\epsilon \to 0} \int_{\Rn \setminus B_{\epsilon}(x)} 2\frac{|u(y)-u(x)|^{p-2}(u(y)-u(x))}{|x-y|^{\alpha p}} \ dy,
$$
 for different functions $f$ on the right-hand side. Here $0<\alpha<1$ and $\Omega$ is a Lipschitizian bounded domain. They also studied the limit equation after letting $p$ tend to infinity. For more related work on the fractional $p$-Laplacian and the nonlocal infinity Laplacian, see, e.g., \cite{SR2019, CRY2010, CS2007}.  

The operator in \eqref{eq:infinitylap} is fundamental for our problem. We note that it depends on $\alpha$ but since it will be clear from context what $\alpha$ is, we will just denote it by $\mathcal{L}_{\infty}u(x)$. The nonlocal infinity Laplacian can be decomposed as 
$$
\mathcal{L}_{\infty}u(x) = \Lneg u(x) + \Lpos u(x),
$$
where
\begin{align*}
  \mathcal{L}^-_{\infty}u(x):=  \inf_{y \in \Rn} \frac{u(y)-u(x)}{|x-y|^{\alpha}}
   \quad \textup{and} \quad \mathcal{L}^+_{\infty}u(x):=  \sup_{y \in \Rn} \frac{u(y)-u(x)}{|x-y|^{\alpha}}.
\end{align*}
Since the operator is not on divergence form there is no natural weak formulation of the problem in \eqref{eq:Luf} and we therefore consider viscosity solutions, see Definition \ref{def:viscositySol}. In this work, we show a comparison principle for viscosity solutions to \eqref{eq:Luf} in the case where $0 < \alpha < 1$. This is Theorem \ref{thm:comparison}. We are not considering existence, and therefore we do not work directly with the boundary function $g$. However, we impose some conditions on our functions outside the domain $\Omega$, namely that they are bounded on $\Rn \setminus \Omega$ and that they have limits at infinity. Furthermore, we also assume that the right-hand side $f\in C(\Omega)$ is bounded and does not switch sign, that is, $f \leq 0$. The same results can also be obtained using similar methods if $f$ is nonnegative. The reason we do not consider the case where $\alpha=1$ is that we will work with the infimal convolution of a supersolution to \eqref{eq:Luf}. It is well known that the infimal convolution is locally Lipschitz continuous and for $0 < \alpha <1$ these functions can be used as test functions. The proof of Theorem \ref{thm:comparison} uses ideas from \cite{CLM2012} and a key factor is that
the operator $ \mathcal{L}^-_{\infty}u(x)$ attains its infimum in $\Rn \setminus \Omega$ for viscosity supersolutions to \eqref{eq:Luf}. This is shown in Lemma \ref{lem:Linfcont} which is based on Lemma 26 in \cite{LL2012}. The main difference from the comparison theorem in \cite{CLM2012} is that we are working in $\Rn$ and we use the infimal convolution of $u$ instead of the mollification of $u$. Furthermore, our right-hand side is not necessarily zero so $\mathcal{L}^+_{\infty}u(x)$ does not necessarily attain its supremum in the complement of $\Omega$. 

\subsection*{Acknowledgments}
We thank Erik Lindgren for suggesting the problem, as well as for his help and encouragement.

\section{Properties of viscosity supersolutions}
\label{sec:supersol}
In this section we are going to analyse viscosity supersolutions to 
\begin{equation}\label{eq:Luf2}   
\Linf u=f  \textup{ in }  \Omega.
\end{equation}
As stated in the introduction, we assume that the right-hand side $f$ is a bounded continuous function such that $f(x) \leq 0$. However, we note that for the proofs of all the lemmas in this section we actually only need $f$ to be nonpositive and in Lemma \ref{lem:infconv} $f$ can be any function. We note that this definition only focuses on the equation in $\Omega$ and does not involve an explicit boundary function. When we prove the comparison principle in Section \ref{sec:comparison} we will assume that $u$ is bounded on $\Rn \setminus \Omega$ and that $u$ has a limit at infinity, but we make no further assumptions on the boundary data. We start by giving the definition of a viscosity supersolution to \eqref{eq:Luf2}.

\begin{definition} \label{def:viscositySol}
Let $u : \R^n \to \R$ be a lower semicontinuous (resp.\ upper semicontinuous) function such that $|u| \leq C(1+|x|)^{\beta}$ for some $C> 0$ and $\beta < \alpha$. Furthermore, let $x_0 \in \Omega$ and let $\varphi(x)$ be a locally Lipschitz continuous function, such that $|\varphi| \leq C(1+|x|)^{\beta}$ for some $C> 0$ and $\beta < \alpha$, and such that $\varphi$ touches $u$ from below (resp. above) at $x_0$. That is,  $\varphi(x_0)=u(x_0)$ and 
     $$
     u \geq \varphi \quad \textup{in } \Rn \quad(\textup{resp. } u \leq \varphi).
     $$
   We say that $u$ is a viscosity supersolution (resp.\ subsolution) to \eqref{eq:Luf2} in $\Omega$ if 
    $$
    (\Linf \varphi)(x_0) \leq f(x_0) \qquad (\textup{resp. }(\Linf \varphi)(x_0) \geq f(x_0)),
    $$
where $f \in C(\Omega)$ is a bounded function such that $f \leq 0$. 

Finally, we say that a continuous function $u$ is a viscosity solution to \eqref{eq:Luf2} if it is both a viscosity supersolution and a viscosity subsolution.
\end{definition}

 In the following lemma we show that if a supersolution attains its minimum in the domain $\Omega$, then the function is constant, i.e., a version of the strong maximum principle.
\begin{lem}
\label{lem:strongmaxprinc}
   Let $u$ be a viscosity supersolution to $\Linf u = f$ in $\Omega$. Furthermore, assume that there exists some $x_0 \in \Omega$ such that $u(y) \geq u(x_0)$ for all $y \in \Rn \setminus \Omega$. Then $u$ is a constant function.
\end{lem}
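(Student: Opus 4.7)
The plan is to argue by contradiction: if $u$ is not constant, I will exhibit a locally Lipschitz bump function $\varphi$ touching $u$ from below at $x_0$ whose nonlocal infinity Laplacian $\Linf \varphi(x_0)$ is strictly positive, violating the viscosity supersolution inequality together with $f(x_0) \leq 0$.

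First I would reduce to the case that $x_0$ is a \emph{global} minimum of $u$ on $\Rn$. The hypothesis gives $u \geq u(x_0)$ on $\Rn \setminus \Omega$, and by lower semicontinuity of $u$ on the compact set $\ol{\Omega}$ its infimum there is attained. A minimizer on $\partial \Omega \subset \Rn \setminus \Omega$ cannot have value below $u(x_0)$ by hypothesis, so if $\inf_{\ol{\Omega}} u < u(x_0)$ the minimum is attained at some interior point $\tilde x_0 \in \Omega$ that I then use in place of $x_0$. Either way I may assume $u(y) \geq u(x_0) =: m$ for every $y \in \Rn$.

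Next, suppose for contradiction $u \not\equiv m$. Pick $z \neq x_0$ with $u(z) > m$ and set $c := (u(z) - m)/2 > 0$. Lower semicontinuity of $u$ at $z$ yields $\rho \in (0, |z - x_0|)$ such that $u > m + c$ on $\ol{B_\rho(z)}$, and in particular $x_0 \notin \ol{B_\rho(z)}$. I take the Lipschitz bump $\psi(y) := \max(0, 1 - |y - z|/\rho)$, supported in $\ol{B_\rho(z)}$ with $\psi(z) = 1$, and define the test function $\varphi(y) := m + c\,\psi(y)$. This $\varphi$ is bounded and Lipschitz, so it meets the regularity and growth hypotheses of Definition \ref{def:viscositySol}, and it touches $u$ from below at $x_0$: since $x_0 \notin \ol{B_\rho(z)}$ one has $\psi(x_0) = 0$, giving $\varphi(x_0) = m = u(x_0)$; on $\ol{B_\rho(z)}$ one has $\varphi \leq m + c < u$; and outside $\ol{B_\rho(z)}$ one has $\varphi \equiv m \leq u$.

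Finally, I evaluate the operator at $x_0$. Choosing $y = z$ gives $\Lpos \varphi(x_0) \geq c\,\psi(z)/|z - x_0|^\alpha = c/|z - x_0|^\alpha > 0$, while the nonnegativity of $\psi$ together with $\psi \equiv 0$ off $\ol{B_\rho(z)}$ forces $\Lneg \varphi(x_0) = 0$. Hence $\Linf \varphi(x_0) \geq c/|z - x_0|^\alpha > 0$, contradicting the supersolution inequality $\Linf \varphi(x_0) \leq f(x_0) \leq 0$. There is no deep obstacle in this argument; the main point is the construction of $\varphi$, where the openness of $\{u > m + c\}$ (a consequence of LSC of $u$) is exactly what lets the bump sit strictly below $u$ on its support without requiring $u$ to be continuous.
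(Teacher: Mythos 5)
Your proof is correct and follows essentially the same strategy as the paper: reduce to the case where the touching point is a global minimum of $u$, then contradict the supersolution inequality with a bump test function whose $\Lpos$ part is strictly positive while its $\Lneg$ part vanishes. Your reduction step is in fact a little cleaner --- you relocate $x_0$ to an interior minimizer obtained from lower semicontinuity and compactness of $\ol{\Omega}$, whereas the paper runs a separate test-function contradiction to show $u \geq u(x_0)$ in $\Omega$ --- but the essential mechanism is identical.
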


\begin{proof}
    We begin by assuming that $u$ is not constant in $\Rn$ because otherwise there is nothing to show. Next we subtract the constant $u(x_0)$ from $u$ and denote the new function by $\tilde u(x): = u(x)-u(x_0)$. We note that
\begin{align} \label{eq:strongmax}
     \begin{cases}
                   \tilde u(x)\geq 0 & \textup{in  } \ \Rn \setminus \Omega, \\
                    \Linf \tilde u \leq 0 & \textup{in} \ \Omega, \\
                    \tilde u(x_0) = 0.
     \end{cases}
\end{align}
The aim is to prove that \eqref{eq:strongmax} implies that $\tilde u \equiv 0$ in $\Rn$ and we start by showing that $\tilde u\geq 0 $ in $\Omega$. We argue by contradiction and therefore assume that
$\inf_{x \in \Omega} \tilde u < 0$. Note that since $\tilde{u} \geq 0$ on $\Rn \setminus \Omega$ the infimum must be attained for some $\tilde x \in \Omega$. Because if not, there exists some sequence $x_k$ such that $x_k \to \tilde{x} \in \partial \Omega$ with $\lim_{k \to \infty} \tilde u(x_k) < 0$. But since $\tilde u$ is lower semicontinuous we know that $\liminf_{k \to \infty} u(x_k) \geq u(\tilde{x})$ and $u(\tilde{x}) \geq 0$ according to \eqref{eq:strongmax}. This is a contradiction and therefore there is a $\tilde{x} \in \Omega$ such that $\tilde{u}(\tilde{x})<0$. Next, since $\tilde u$ is a supersolution we know that $\Linf \psi(\tilde{x}) \leq 0$ for any test function $\psi$ that touches $\tilde u$ from below at $\tilde x$ i.e., 
\begin{equation}
\label{eq:help}
\Linf \psi(\tilde{x}) = \inf_{y \in \Rn} \frac{\psi(y)-\psi(\tilde{x})}{|\tilde{x}-y|^{\alpha}} + \sup_{y \in \Rn} \frac{\psi(y)-\psi(\tilde{x})}{|\tilde{x}-y|^{\alpha}}  \leq 0.
\end{equation}
We recall that $\tilde{x}$ is a minimum for $\tilde u$ and $\tilde{u}(x) \geq 0$ in $\Rn \setminus \Omega$. We can thus choose a Lipschitz continuous test function 
\begin{align*}
   \psi(x) =  \begin{cases}
                   \tilde u(\tilde{x})\ & \textup{in  }  \ol{\Omega}, \\
                    \tilde u(\tilde{x}) < \psi(x) \leq 0 & \textup{in} \ \Rn \setminus \ol{\Omega},
     \end{cases}
\end{align*}
and note that for $y \in \Rn  \setminus \ol{\Omega}$ we have that $\psi(y)-\psi(\tilde x)>0$. But then the infimum in \eqref{eq:help} is greater than or equal to zero while the supremum must be strictly greater than zero, but this means that $\Linf \psi(\tilde{x}) >  0$ which is a contradiction. Thus, $\tilde u \geq 0$ in $\Omega$ and therefore $\tilde{u}\geq0$ in $\Rn$.

Since we have assumed that $u$ and therefore $\tilde{u}$ is not constant and $\tilde{u} \geq 0$ there exists a $\hat{x} \in \Rn$ such that $\tilde{u}(\hat{x}) >0.$ If we choose a non-negative test function $\varphi$ that touches $\tilde u$ from below at $x_0$ we see that
\begin{equation*}
    \Linf \varphi(x_0) = \inf_{y \in \Rn} \frac{\varphi(y)}{|y-x_0|^{\alpha}} + \sup_{y \in \Rn} \frac{\varphi(y)}{|y-x_0|^{\alpha}} \geq \sup_{y \in \Rn} \frac{\varphi(y)}{|y-x_0|^{\alpha}},
\end{equation*}
since $\varphi(x_0)=\tilde{u}(x_0)=0$ and $\varphi(x) \geq 0$. But since we assumed that $u(\hat{x})>0$ it follows that we can choose a test function $\varphi$ which is not identically equal to zero but positive in at least some points. This follows since $\tilde{u}$ is lower semicontinuous so the set 
$$
\tilde{u}^-((\tilde{u}(\hat{x})/2, \infty)) = \{x \in \Rn:\tilde{u}(x)> \tilde{u}(\hat{x})/2 \},
$$
is open and this set is clearly not empty since $\tilde{u}(\hat{x})/2 >0$. Thus, $\Linf \varphi(x_0)>0$ which is a contradiction since $\tilde u$ is a supersolution. We therefore conclude that $\tilde u \equiv 0$ in $\Rn$ and it follows that $u(x)=u(x_0)$, i.e., $u$ is a constant function.
\end{proof}
Next, we take $x \in \Omega$ and define the operator
$$
L^-u(x) = \inf_{y \in \Rn \setminus \Omega} \frac{u(y)-u(x)}{|y-x|^{\alpha}}.
$$
For viscosity supersolutions to \eqref{eq:Luf2} it turns out that $\Lneg u(x) =L^- u(x)$. This is the content of the next lemma and it will be crucial when we prove the uniqueness in Theorem \ref{thm:comparison}. The ideas in the proof are from Lemma 26 in \cite{LL2012}
\begin{lem}
\label{lem:Linfcont}
Let $u$ be a bounded viscosity supersolution to $\Linf u = f$ in $\Omega$. Then, for all $x \in \Omega$, we have that
\begin{equation}
\label{eq:Linf}
    \Lneg u(x) = \inf_{y \in \Rn \setminus \Omega} \frac{u(y)-u(x)}{|y-x|^{\alpha}},
\end{equation}
i.e., the infimum is attained in the complement of $\Omega$.
\end{lem}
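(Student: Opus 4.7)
The plan is to argue by contradiction. Since $u$ is bounded, letting $|y|\to\infty$ in the definition of $L^{-}u(x)$ forces $c:=L^{-}u(x)\le 0$. Suppose $\Lneg u(x)<c$ and fix $\lambda$ with $\Lneg u(x)<\lambda<c$; in particular $\lambda<0$. The first step is to locate a point $\tilde y\in\Omega\setminus\{x\}$ at which the auxiliary function
$$v(y):=u(y)-\lambda|y-x|^{\alpha}$$
attains a strict global minimum. By the choice of $\lambda$, there exists $\hat y\in\Omega$ with $v(\hat y)<v(x)=u(x)$. On the other hand, for $y\in\Rn\setminus\Omega$ the inequality $u(y)\ge u(x)+c|y-x|^{\alpha}$ upgrades to $v(y)\ge v(x)+(c-\lambda)|y-x|^{\alpha}>v(x)$ (with a uniform positive gap on $\partial\Omega$, since $c-\lambda>0$). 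Since $v$ is lower semicontinuous and $v(y)\to+\infty$ as $|y|\to\infty$ (here $\lambda<0$ is used together with boundedness of $u$), $v$ attains its minimum on $\Rn$ at some $\tilde y\in\Omega\setminus\{x\}$ with $v(\tilde y)\le v(\hat y)<v(x)$.

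The minimization rewrites as $u(y)\ge u(\tilde y)+\lambda(|y-x|^{\alpha}-|\tilde y-x|^{\alpha})$ for every $y\in\Rn$, with equality at $\tilde y$, so the natural candidate test function at $\tilde y$ is the cone
$$\varphi_0(y):=v(\tilde y)+\lambda|y-x|^{\alpha}.$$
Using the two-sided bound $-1\le (|z-x|^{\alpha}-|\tilde y-x|^{\alpha})/|z-\tilde y|^{\alpha}\le 1$, which follows from the subadditivity $(a+b)^{\alpha}\le a^{\alpha}+b^{\alpha}$ for $\alpha\in(0,1]$ applied to both triangle inequalities, a direct computation shows $\Linf \varphi_0(\tilde y)=0$, so $\varphi_0$ alone does not contradict the supersolution property. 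Moreover $\varphi_0$ is not admissible as a test function: it has a Hölder cusp at $x$ and grows like $|y|^{\alpha}$ at infinity.

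The crux is to modify $\varphi_0$ into an admissible $\varphi$ with $\Linf \varphi(\tilde y)>0$. Two modifications suffice. First, on a small ball $B(x,\rho_0)$ with $\rho_0<|\tilde y-x|$, replace $\varphi_0$ by a Lipschitz interpolation that lifts $\varphi(x)$ up to $v(x)-\epsilon$ for a small $\epsilon\in(0,v(x)-v(\tilde y))$. The strict gap $u(x)-\varphi_0(x)=v(x)-v(\tilde y)>0$ combined with lower semicontinuity of $u$ at $x$ ensures $\varphi\le u$ on $B(x,\rho_0)$ provided $\rho_0,\epsilon$ are sufficiently small. Second, truncate $\varphi$ from below at $-N$ with $N\ge\|u\|_{\infty}$, which cures the growth and is compatible with $\varphi\le u$. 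The resulting $\varphi$ is locally Lipschitz, bounded, satisfies $\varphi\le u$ everywhere, and touches $u$ from below at $\tilde y$.

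For this modified $\varphi$, the supremum in $\Linf \varphi(\tilde y)$ is bounded below by its value at $z=x$, namely $(\varphi(x)-\varphi(\tilde y))/|x-\tilde y|^{\alpha}=-\lambda+(v(x)-v(\tilde y)-\epsilon)/|\tilde y-x|^{\alpha}>-\lambda$, a strict gain compared to $\varphi_0$. On the other hand, the infimum stays at least $\lambda$ in each of the three regions (Lipschitz interpolation inside $B(x,\rho_0)$, cone in $\rho_0\le|z-x|\le R$, truncation beyond $R$), as one verifies case by case from the subadditivity inequality together with $\lambda<0$. Summing gives
$$\Linf \varphi(\tilde y)\ge \frac{v(x)-v(\tilde y)-\epsilon}{|\tilde y-x|^{\alpha}}>0,$$
contradicting $\Linf \varphi(\tilde y)\le f(\tilde y)\le 0$. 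The main obstacle is the delicate three-region bookkeeping: one must verify that the Lipschitz lift at $x$ produces a genuine strict increase in the supremum while preserving the universal infimum bound $\lambda$, and that the truncation at infinity does not destroy either estimate.
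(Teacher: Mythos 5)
Your argument is correct, and it reaches the conclusion by a genuinely different route than the paper. The paper splits into two cases: when $L^-u(x_0)<0$ it compares $u$ with $w=u(x_0)+L^-u(x_0)\,C_{x_0,R}$, where $C_{x_0,R}=\min\{|x_0-\cdot|^{\alpha},R^{\alpha}\}$ is a \emph{truncated} cone, and the needed strict inequality $\Linf w>0$ is imported from Lemma 24 of the Lindgren--Lindqvist paper (the strictness there comes from the truncation at radius $R$ together with strict subadditivity of $t\mapsto t^{\alpha}$); when $L^-u(x_0)\geq 0$ it falls back on the strong maximum principle (Lemma \ref{lem:strongmaxprinc}). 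You avoid both the case split and the external lemma: you take a full cone of slope $\lambda$ strictly between $\Lneg u(x)$ and $L^-u(x)$, locate the minimum of $u-\lambda|\cdot-x|^{\alpha}$ at some $\tilde y\in\Omega\setminus\{x\}$ (your boundary estimate $v\geq v(x)+(c-\lambda)\dist(x,\partial\Omega)^{\alpha}$ on $\Rn\setminus\Omega$ and the coercivity from $\lambda<0$ make this clean), and then manufacture the strict inequality not from truncating the cone at infinity but from lifting the test function at the cone vertex $x$, exploiting the positive gap $v(x)-v(\tilde y)>0$ together with lower semicontinuity of $u$ at $x$. Your approach buys a single unified argument with a self-contained computation; the paper's buys a shorter write-up at the price of citing an external cone lemma and invoking the strong maximum principle for the degenerate case. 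One presentational suggestion: the "three-region bookkeeping" for $\Lneg\varphi(\tilde y)\geq\lambda$ collapses to one line if you record that your modifications only \emph{increase} $\varphi_0$ (choose the interpolation in $B(x,\rho_0)$ above $\varphi_0$, and note $\max\{\varphi_0,-N\}\geq\varphi_0$) while keeping $\varphi(\tilde y)=\varphi_0(\tilde y)$; then every difference quotient of $\varphi$ at $\tilde y$ dominates the corresponding quotient of $\varphi_0$, which is $\geq\lambda$ by subadditivity, and only the gain in the supremum at $z=x$ needs to be checked separately.
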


\begin{proof}
We take $x_0 \in \Omega$ and split the proof into the two cases $L^- u(x_0) < 0$ and $L^- u(x_0) \geq 0$.

\textbf{Case 1:} $L^- u(x_0) < 0$. \\
We define
$$
w(x) = u(x_0) + L^- u(x_0) C_{x_0,R}(x),
$$
where 
$$
C_{x_0,R}(x) = \min\{|x_0-x|^{\alpha}, R^{\alpha} \},
$$
and $R>0$ is a constant that is specified below.
 If $\Lneg u(x_0)$ is attained for $x \in \Rn \setminus \Omega$ there is nothing to show and we therefore assume that $\Lneg u(x_0)$ is attained for $x \in \Omega$. We choose $R$ so large that $\Omega \subset \subset B_R(x_0)$.
If we can prove that $w(x) \leq u(x)$ for $x \in \Omega$, we are done, since then it follows that
$$
\frac{u(x)-u(x_0)}{|x-x_0|^{\alpha}} \geq L^-u(x_0),
$$
since $C_{x_0,R}(x)= |x_0-x|^{\alpha}$, for $x \in \Omega$. We note that
\begin{enumerate}
\item $u(x_0) = w(x_0)$,
\item From the definition of $w(x)$ it follows that
\begin{align*}
    u(x)-w(x) & = C_{x_0,R}(x) \left(\frac{u(x)-u(x_0)}{\min\{|x_0-x|^{\alpha}, R^{\alpha} \}}- \inf_{y \in \Rn \setminus \Omega} \frac{u(y)-u(x_0)}{|y-x_0|^{\alpha}} \right)\\
    & \geq 0,
\end{align*}
so clearly, $u \geq w$ in $ \Rn \setminus \Omega$.
\item From Lemma 24 in \cite{LL2012} we know that $\Linf  C_{x_0,R}(x) < 0$ in $\Omega \setminus \{x_0\}$ and since
 $$
 \mathcal{L}_{\infty}w(x) = L^-u(x_0)\mathcal{L}_{\infty} C_{x_0,R}(x),
 $$
 and $L^- u(x_0)< 0$ by assumption, it follows that 
 $$\Linf w(x) > 0 \textup{ for } x \in \Omega \setminus \{x_0\}.
 $$
 \item $\Linf u(x) \leq f(x) \leq 0$ for $x \in \Omega$, in the viscosity sense since $u$ is a viscosity supersolution.
\end{enumerate}

These four points imply that $w(x) \leq u(x)$. Because if not, there exists at least some point $\hat{x}$ in $\Omega \setminus \{x_0\}$ such that $u(\hat{x}) < w(\hat{x})$. We set
$$
d = \sup_{x \in \Omega} w(x)-u(x),
$$
and note that $d >0$. We let $\tilde{x}$ be the point where the supremum is achieved. We note that $\tilde{x}$ exists due to point 2 and the fact that $u$ is lower semicontinuous. We then choose the test function $\varphi(x) = w(x)- (w(\tilde{x})-u(\tilde{x}))$ and note that due to point 1 and 2, and the way that we chose $\tilde{x}$, it follows that $\varphi$ touches $u$ from below at $\tilde{x}$. Furthermore, because of point 1 we know that $\tilde{x} \neq x_0$. According to point 4 we must have that $\Linf w(\tilde{x}) \leq 0$ but this contradicts the third point i.e., that $\Linf w(\tilde{x}) > 0$ in $\Omega \setminus \{x_0\}$. Thus, $w(x) \leq u(x)$ which implies \eqref{eq:Linf}, and thus the lemma is proved.

\textbf{Case 2:} $L^- u(x_0) \geq 0$. \\
In this case, $u(y) \geq u(x_0)$ for all $y \in \Rn \setminus \Omega$ and it follows from Lemma \ref{lem:strongmaxprinc} that $u(x_0)=C$ in $\Rn$. In this case $\Lneg u(x_0) = L^-u(x_0)=0$ so the result holds trivially. 

\end{proof}

We are now going to define the infimal convolution of a bounded and lower semicontinuous function $u$. 

\begin{definition}
\label{def:infconv}
    Let $u$ be bounded and lower semicontinuous. For $\epsilon > 0$ and $x \in \Rn$, we define the infimal convolution of $u$ as 
\begin{equation}\label{eq:infconv}
      u_{\epsilon}(x) \coloneqq \inf_{y \in \Rn} \left(\frac{|x-y|^2}{2 \epsilon} + u(y)\right).    
\end{equation}
\end{definition}
We will call the point in which the infimum in \eqref{eq:infconv} is attained for $x^*$. Clearly the sequence $u_{\epsilon}(x)$ increases pointwise towards $u(x)$ as $\epsilon \to 0$. It is well known that the function
$$
v_{\epsilon}(x) = u_{\epsilon}(x)-\frac{|x|^2}{2 \epsilon},
$$
is concave and therefore $u_{\epsilon}$ is locally Lipschitz continuous and has second Alexandrov derivatives. For more information about the infimal convolution, see e.g., \cite{JP2012}. We proceed by showing that the infimal convolution of a viscosity supersolution to \eqref{eq:Luf2} is a supersolution in the classical sense. This proof is similar to the proof of Proposition 1 in \cite{LM2006}.

\begin{lem}
\label{lem:infconv}
Let $u(x)$ be a viscosity supersolution to \eqref{eq:Luf2} such that $0 \leq u(x) \leq L$, for $x \in \Omega$. We take $\epsilon >0$ and define $\Omega_{\epsilon}$ as 
$$
\Omega_{\epsilon} = \{x \in \Omega: \dist(x, \partial \Omega)> r(\epsilon)\},
$$
where $r(\epsilon) \coloneqq \sqrt{2L\epsilon}$. Then, $u_{\epsilon}$ is a supersolution of
    \begin{equation*}
        \Linf \ue (x) = \tilde{f}^{\epsilon}(x),
    \end{equation*}
in $\Omega_{\epsilon}$ in the classical sense where
$$
\tilde{f}^{\epsilon}(x) =  f(x^*).
$$
\end{lem}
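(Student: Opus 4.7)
The plan is to transfer the viscosity supersolution inequality for $u$ at the minimizer $x^*$ into a classical pointwise inequality for $\ue$ at $x$, by exploiting the translation structure of the infimal convolution. First I would verify that for $x \in \Omega_\epsilon$ the optimizer $x^*$ actually lies in $\Omega$. Testing the infimum defining $\ue(x)$ with the competitor $y = x$ gives
$$\frac{|x-x^*|^2}{2\epsilon} + u(x^*) \leq u(x) \leq L,$$
and the nonnegativity of $u$ then yields $|x-x^*| \leq \sqrt{2L\epsilon} = r(\epsilon)$, which places $x^*$ inside $\Omega$ by the definition of $\Omega_\epsilon$.

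Next I would construct the natural test function for $u$ at $x^*$ by translating $\ue$:
$$\psi(y) := \ue(x + y - x^*) - \frac{|x-x^*|^2}{2\epsilon}.$$
One checks $\psi(x^*) = u(x^*)$ directly from the minimizing property, and $\psi(y) \leq u(y)$ by testing the infimum defining $\ue(x+y-x^*)$ against the competitor $y$ itself, which gives $\ue(x+y-x^*) \leq \frac{|x-x^*|^2}{2\epsilon} + u(y)$. Since $\ue$ is locally Lipschitz, so is $\psi$, and the growth bound $|\psi(y)| \leq C(1+|y|)^\beta$ with $\beta < \alpha$ propagates from the corresponding bound on $u$ via $\ue$ after a routine verification. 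Applying the viscosity supersolution condition to $\psi$ at $x^*$ then produces $\Linf \psi(x^*) \leq f(x^*) = \tilde{f}^{\epsilon}(x)$.

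The last step is the change of variables $w = x + z - x^*$ inside both the sup and inf defining $\Linf \psi(x^*)$: this is a bijection of $\Rn$ onto itself, it preserves distances via $|x^* - z| = |x - w|$, and it converts each increment $\psi(z) - \psi(x^*)$ into $\ue(w) - \ue(x)$ over the same denominator. It follows that $\Linf \psi(x^*) = \Linf \ue(x)$, so combining with the previous inequality yields the desired $\Linf \ue(x) \leq \tilde{f}^{\epsilon}(x)$ as a classical pointwise inequality. I expect the main technical nuisance to be the growth verification for $\psi$, which reduces to checking $|\ue(y)| \leq C(1+|y|)^\beta$; this requires showing that the quadratic penalty confines the optimizer in the infimal convolution to a set whose size grows only mildly with $|y|$, so that the subquadratic growth of $u$ transfers to $\ue$. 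Once this is in place, everything else is a direct algebraic transfer of the supersolution inequality from $x^*$ to $x$.
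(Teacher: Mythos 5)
Your proposal is correct and follows essentially the same route as the paper: confine $x^*$ to $B_{r(\epsilon)}(x) \subset \Omega$ via the bound $0 \leq u \leq L$, build the translated test function $\psi(y) = \ue(x+y-x^*) - \tfrac{|x-x^*|^2}{2\epsilon}$ touching $u$ from below at $x^*$, and transfer $\Linf\psi(x^*) = \Linf\ue(x)$ by the distance-preserving change of variables. The only addition is your explicit remark about verifying the growth bound for $\psi$, which the paper leaves implicit; that is a reasonable point of care but does not change the argument.
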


\begin{proof}
 Let $x_0 \in \Omega$ be such that $\dist(x_0, \partial \Omega)> \sqrt{2L\epsilon}$. Furthermore, let $x_0^*$ be the point where the infimum is attained, i.e., 
   \begin{align}\label{eq:test1}
         u_{\epsilon}(x_0) = \frac{|x_0-x_0^*|^2}{2 \epsilon} + u(x_0^*).
    \end{align}
    We note that, since $u \geq 0$ in $\Omega$,
\begin{equation*}
    \frac{|x_0-x_0^*|^2}{2\epsilon} \leq \frac{|x_0-x_0^*|^2}{2\epsilon} +u(x_0^*) = u_{\epsilon}(x_0) \leq u(x_0) \leq L.
\end{equation*}
Thus, we see that $|x_0-x_0^*| \leq r(\epsilon)$, that is, $x_0^* \in B_{r(\epsilon)}(x_0)$. By definition it holds that
    \begin{align}\label{eq:test2}
        u_{\epsilon}(x) \leq \frac{|x-y|^2}{2 \epsilon} + u(y),
    \end{align}
    for all $y \in \Rn$. We set 
    \begin{equation*}
        \psi(x) = u_{\epsilon}(x-z)-\frac{|z|^2}{2 \epsilon},
    \end{equation*}
    for $z= x_0^*-x_0$. Next we note that
    \begin{align*}
        \psi(x_0^*) = u_{\epsilon}(x_0) - \frac{|x_0-x_0^*|^2}{2 \epsilon} = u(x_0^*),
    \end{align*}
    where we used \eqref{eq:test1} for the last equality. We also note that according to \eqref{eq:test2} it follows that
    \begin{align*}
        u_{\epsilon}(x-z) \leq \frac{|x-z-y|^2}{2 \epsilon} + u(y),
    \end{align*}
    for any $y \in \Rn$. If we choose $y=x$ we see that $u(x) \geq u_{\epsilon}(x-z)-\tfrac{|z|^2}{2 \epsilon} = \psi(x)$. Thus, $\psi(x)$ touches $u(x)$ from below at $x_0^*$ and since $u_{\epsilon}$ is locally Lipschitz continuous it can be used as a test function for $u$ at $x_0^*$. Since $u_{\epsilon}(y) = \psi(y+z) + \tfrac{|z|^2}{2 \epsilon}$ we see that
    \begin{align*}
        \Lneg u_{\epsilon}(x_0) &= \inf_{y \in \Rn} \frac{u_{\epsilon}(y) -u_{\epsilon}(x_0)}{|x_0-y|^{\alpha}} = \inf_{y \in \Rn} \frac{\psi(z+y)-\psi(x_0+z)}{|x_0-y|^{\alpha}} \\ 
        &=  \inf_{y \in \Rn} \frac{\psi(z+y)-\psi(x_0^*)}{|x_0^*-(z+y)|^{\alpha}} 
        = \inf_{\xi \in \Rn} \frac{\psi(\xi)-\psi(x_0^*)}{|x_0^*-\xi|^{\alpha}} \\
    &= \Lneg \psi(x_0^*).
    \end{align*}
    In the same way, it follows that $\Lpos u_{\epsilon}(x_0) = \Lpos \psi(x_0^*)$ and therefore $\Linf u_{\epsilon}(x_0)= \Linf \psi(x_0^*)$. Thus,
    $$
    \Linf u_{\epsilon}(x_0) \leq f(x_0^*) =: \tilde{f}^{\epsilon}(x_0),
    $$
    which proves the lemma.
\end{proof}

\section{Uniqueness}\label{sec:comparison}
We are now ready to show the uniqueness of viscosity solutions to \eqref{eq:Luf}.
\begin{thm}\label{thm:comparison}
    Let $u$ and $v$ be a viscosity supersolution and a viscosity subsolution to \eqref{eq:Luf2}, respectively, where $0 < \alpha < 1$, and $f \leq 0$ is bounded and continuous in $\Omega$. Furthermore, let $u(x)$ be bounded on $ \Rn \setminus \Omega$ and $u \geq v$ on $\Rn \setminus \Omega$. In addition, we assume that $u(x)$ and $v(x)$ have limits, as $x$ tends to infinity, in the sense that $\lim_{|x| \to \infty} u(x) = C_1$ and $\lim_{|x| \to \infty} v(x) = C_2$. Then $u \geq v$ in $\Omega$.
\end{thm}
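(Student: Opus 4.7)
The plan is to argue by contradiction, regularize $u$ via infimal convolution, and use Lemma \ref{lem:Linfcont} to refine the naive comparison test function into one that delivers a uniform quantitative gap. Suppose $M := \sup_{\Omega}(v - u) > 0$. Since $\Linf$ is invariant under addition of constants, I would first shift $u$ and $v$ by the same constant so that $u \geq 0$ on $\Omega$, and then replace $u$ by its infimal convolution $\ue$ (Definition \ref{def:infconv}). By Lemma \ref{lem:infconv}, $\ue$ is a classical supersolution in $\Omega_\epsilon$ with right-hand side $\tilde f^\epsilon(x) = f(x^*)$.

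I would then locate the maximum of $v - \ue$ on $\Rn$. Since $v$ is upper semicontinuous, $\ue$ is continuous, $\ue \leq u$ pointwise with $\ue \uparrow u$ as $\epsilon \to 0$, and $v - \ue$ has limit $C_2 - C_1 \leq 0$ at infinity (using the limit hypothesis), the supremum $M_\epsilon := \sup_{\Rn}(v - \ue) \geq M > 0$ is attained at some $\hat x_\epsilon$. The hypothesis $u \geq v$ on $\Rn \setminus \Omega$ together with pointwise convergence $\ue \to u$ forces the maximizer into $\Omega$ for small $\epsilon$, and an upper-semicontinuity argument using $(v-u)(x) \leq 0$ on $\partial \Omega$ places $\hat x_\epsilon$ inside $\Omega_\epsilon$. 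The function $\psi := \ue + M_\epsilon$ is then a valid test function from above for $v$ at $\hat x_\epsilon$; the subsolution property yields $\Linf \ue(\hat x_\epsilon) \geq f(\hat x_\epsilon)$, while Lemma \ref{lem:infconv} gives $\Linf \ue(\hat x_\epsilon) \leq f(\hat x_\epsilon^*)$. Since $|\hat x_\epsilon - \hat x_\epsilon^*| \to 0$ and $f$ is continuous, this sandwich alone does not produce a contradiction.

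To create a uniform gap I would invoke Lemma \ref{lem:Linfcont} applied to $\ue$, which locates the minimizer $y_0$ of $\Lneg \ue(\hat x_\epsilon)$ in $\Rn \setminus \Omega_\epsilon$, and, after a limiting argument, in $\Rn \setminus \Omega$. The outside hypothesis $v(y_0) \leq u(y_0)$ together with $u(y_0) - \ue(y_0) \to 0$ supplies a strict gap $v(y_0) < \ue(y_0) + M_\epsilon$ of size approximately $M$. I would then modify the test function to $\tilde \psi := \ue + M_\epsilon - \eta$, where $\eta \geq 0$ is a bounded, locally Lipschitz function with $\eta(\hat x_\epsilon) = 0$, $\eta \leq M_\epsilon - (v - \ue)$ pointwise (ensuring $\tilde \psi \geq v$), and $\eta(y_0) \geq c_0(M) > 0$. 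Since $\tilde\psi(y) - \tilde\psi(\hat x_\epsilon) \leq \ue(y) - \ue(\hat x_\epsilon)$ globally with strict improvement at $y_0$, this yields $\Linf \tilde\psi(\hat x_\epsilon) \leq \Linf \ue(\hat x_\epsilon) - c$ for a constant $c > 0$ independent of $\epsilon$, and the subsolution inequality for $\tilde\psi$ combined with the supersolution inequality for $\ue$ forces $f(\hat x_\epsilon) \leq f(\hat x_\epsilon^*) - c$, which contradicts the continuity of $f$ as $\epsilon \to 0$.

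The main obstacle is the construction of $\eta$: it must satisfy the growth bound $|\eta| \leq C(1+|x|)^\beta$, be locally Lipschitz, remain below the envelope $M_\epsilon - (v - \ue)$ (which forces it to vanish near $\hat x_\epsilon$), and still attain the quantitative value $\eta(y_0) \geq c_0(M)$. A secondary technical difficulty is verifying that the minimizer $y_0$ from Lemma \ref{lem:Linfcont} lies outside $\Omega$ rather than in the shell $\Omega \setminus \Omega_\epsilon$, and remains at bounded distance from $\hat x_\epsilon$ uniformly in $\epsilon$; this may require applying Lemma \ref{lem:Linfcont} to $u$ itself and passing to the limit, together with a compactness argument exploiting the limits at infinity.
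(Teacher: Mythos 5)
Your plan follows the paper's proof almost step for step: argue by contradiction from $M>0$, regularize $u$ by infimal convolution, use Lemma \ref{lem:Linfcont} to push the minimizer of $\Lneg$ outside the domain, and perturb the test function downward away from the maximizing set to manufacture a uniform gap $\delta$ that the continuity of $f$ cannot absorb. The two ``obstacles'' you flag are real, and both are resolved in the paper. For the construction of $\eta$: the paper first proves the buffer estimate $v \leq u_{\epsilon} + M_{\epsilon}/8$ on $\Rn\setminus\Omega_{\tau}$ for all small $\epsilon$ and $\tau$ (a diagonal-sequence argument using $u \geq v$ outside $\Omega$ and the limits at infinity). It then takes $\eta_{\epsilon}$ to be a smooth cutoff equal to $1$ on all of $\Rn\setminus\Omega_{\epsilon}$ and $0$ on $\Omega_{\tau} \supset K_{\epsilon}$, and perturbs by $(M_{\epsilon}/4)\eta_{\epsilon}$. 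The envelope condition is then automatic: wherever $\eta_{\epsilon}>0$ one has $M_{\epsilon}-(v-u_{\epsilon})\geq 7M_{\epsilon}/8 > M_{\epsilon}/4$. Crucially, $\eta_{\epsilon} \equiv 1$ on the entire region where, by Lemma \ref{lem:Linfcont}, the infimum of $\Lneg\varphi_{\epsilon}$ is attained --- not merely bounded below at a single point $y_0$ --- which sidesteps your worry about locating $y_0$ precisely in $\Rn\setminus\Omega$ versus the shell. The resulting gap is $\delta_{\epsilon} = M_{\epsilon}/(4|y_{\epsilon} - x_{\epsilon}|^{\alpha})$, and your second obstacle (keeping $|y_{\epsilon} - x_{\epsilon}|$ bounded uniformly in $\epsilon$) is handled by showing that if $y_{\epsilon} \to \infty$ then the infimum tends to $0$, which via Lemma \ref{lem:strongmaxprinc} forces $u$ (or $u_{\epsilon}$) to be constant, a case the paper disposes of directly.

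One genuine gap you should not gloss over: the theorem assumes $u$ bounded only on $\Rn\setminus\Omega$, not on $\Omega$, while Lemma \ref{lem:infconv} needs $0 \leq u \leq L$ on $\Omega$ in order to control $|x - x^*| \leq \sqrt{2L\epsilon}$. Your normalization ``shift so that $u\geq 0$'' uses only a lower bound (available from Lemma \ref{lem:strongmaxprinc}), but without an upper bound on $u$ in $\Omega$ the infimal-convolution machinery breaks down. The paper handles this by a final truncation step: replace $u$ by $\min\{u,K\}$ in $\Omega$ for $K$ large enough that the truncation remains a supersolution (possible because $u$ is bounded outside $\Omega$ and $f$ is bounded), prove $v \leq \min\{u,K\}$ by the bounded case, and conclude $v \leq u$.
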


The structure and main ideas of the proof are as follows: We start by assuming that $u$ is bounded on $\Omega$ and that the claim of the theorem is not true. Since $u$ is a supersolution, we can use Lemma \ref{lem:Linfcont} to conclude that for $x \in \Omega$, $\Lneg u(x)$ is attained outside of $\Omega$. This makes it possible for us to modify $u$ so that it becomes a strict supersolution, which together with the fact that $v$ is a subsolution allows us to reach a contradiction. However, the function $u$ is not smooth enough and, therefore, we need to use the infimal convolution $u_{\epsilon}$ throughout the proof. The ideas remain the same, but as a result the proof will be more technically complicated. By the end of the proof we use that the right-hand side $f$ is bounded on $\ol{\Omega}$ in order to remove the assumption that $u$ is bounded.

\begin{proof}[proof of Theorem \ref{thm:comparison}]
At first we assume that $u$ is bounded on $\Omega$ i.e., $|u(x)| \leq \wt{L}$ for some constant $\wt{L} > 0$ whenever $x \in \Omega$. By adding a constant (to both $u$ and $v$) we may assume that $0\leq u(x)\leq L$ for some constant $L > 0$. We define
\begin{align*}
    M = \sup_{x \in \Rn} (v(x)-u(x)),
\end{align*}
and set
\begin{align*}
    K_0 = \{x \in \Rn:v(x)-u(x) = M \}.
\end{align*}
For the sake of contradiction we assume that $M \geq \gamma > 0$ and we note that $M$ must be finite. Since $v$ is a subsolution, $-v$ is a supersolution, and it follows from Lemma \ref{lem:strongmaxprinc} that the minimum of $-v$ is obtained outside of $\Omega$ or the function is constant. If $-v$ is constant, $v$ is obviously bounded, and if it is not constant it follows that the maximum of $v$ is obtained outside of $\Omega$. Since $v \leq u \leq L$ on $\Rn \setminus \Omega$ it follows that $v(x) \leq L$ on $\Omega$. Thus, $v$ is bounded from above and since $u$ is bounded by assumption we clearly have that $M < \infty$.

Since $u(x) \geq v(x)$ on $\Rn \setminus \Omega$, clearly $K_0 \subset \Omega$ and since $u$ is lower semicontinuous and $v$ is upper semicontinuous it follows that $K_0 \subset \subset \Omega$. Indeed, if $K_0$ is not compactly contained in $\Omega$, then there exists $x_0 \in \partial \Omega$ such that $x_0 \in \ol{K_0}$. For any $\epsilon > 0$ we have the following: by the upper semicontinuity of $v$, there is a neighbourhood $V_\epsilon = V_\epsilon(x_0)$ of $x_0$ in $\Omega$ such that $v(y) \leq v(x_0) + \epsilon$ for $y \in V_\epsilon$ and by the lower semicontinuity of $u$, there exists a neighbourhood $U_\epsilon = U_\epsilon(x_0)$ of $x_0$ in $\Omega$, such that $u(y) \geq u(x_0) - \epsilon$ for $y \in U_\epsilon$. Then, $V_\epsilon \cap U_\epsilon$ is a neighbourhood of $x_0$ in which $v(y) - u(y) \leq v(x_0) - u(x_0) + 2\epsilon \leq 2\epsilon$, since $u \geq v$ in $\R^n \setminus \Omega$. By picking $\epsilon < \gamma /2$, we get a contradiction and hence that $K_0 \subset \subset \Omega$. 

For some small enough $\beta>0$ we consider a compact $\beta$-neighbourhood $K(\beta)$ of $K_0$ such that
$$
K_0 \subset \subset K(\beta) \subset \subset \Omega,
$$
and for $x \in \Rn$ we define the infimal convolution of $u$ as in Definition \ref{def:infconv}. We recall that as $\epsilon \to 0^+$ it follows that $u_{\epsilon}(x) \to u(x)$ from below and by Lemma~\ref{lem:infconv}, $u_{\epsilon}$ is a supersolution to $\Linf u_{\epsilon} =  \tilde{f}^{\epsilon}$ in 
$$
\Omega_{\epsilon} = \{x \in \Omega:\dist(x, \partial \Omega) > r(\epsilon) \},
$$
where $r(\epsilon) = \sqrt{2L\epsilon}$, and 
$$
 \tilde{f}^{\epsilon}(x) =  f(x^*) \leq \sup_{y \in B_{r(\epsilon)}(x)} f(y).
$$
Here $x^*$ is the point where the infimum is attained for the infimal convolution of $u$ at the point $x$. We choose $\epsilon$ small enough so that $K_0 \subset \subset K(\beta) \subset \subset \Omega_{\epsilon}$ and proceed to define
\begin{align*}
    M_{\epsilon} &= \sup_{x \in \Rn}(v(x)-u_{\epsilon}(x)) \geq M, \\
    K_{\epsilon} &= \{x \in \Rn  :v(x)-u_{\epsilon}(x) = M_{\epsilon} \}.
\end{align*}
 We want to show that $K_{\epsilon} \subset \subset K(\beta)$ for sufficiently small values of $\epsilon$. In order to do so we are going to show that every sequence of points in $K_{\epsilon}$ will converge to a point in $K_0$. We first show that there exists some $\epsilon_0$ such that the set $K_{\epsilon}$ is bounded for  $\epsilon \leq \epsilon_0$. This can be seen as follows: If it is not true there exists a sequence ${\epsilon}_k \to 0$ as $k \to \infty$ such that $x_{{\epsilon}_k} \in K_{{\epsilon}_k}$ and $|x_{\epsilon_k}| \to \infty$ as $k \to \infty$, and
\begin{align*}
   M_{\epsilon_k} &=  v(x_{\epsilon_k})-u_{\epsilon_k}(x_{\epsilon_k}).
\end{align*}
Since $|x_{\epsilon_k}^*| \to \infty $ as $|x_{\epsilon_k}|\to \infty$ we see that
$$
\lim_{k \to \infty} u_{\epsilon_k}(x_{\epsilon_k})-u(x_{\epsilon_k}) \geq \lim_{k \to \infty} u(x_{\epsilon_k}^*)-u(x_{\epsilon_k}) = C_1-C_1 =0,
$$
i.e., $\lim_{k \to \infty} u_{\epsilon_k}(x_{\epsilon_k}) \geq \lim_{k \to \infty} u(x_{\epsilon_k})=C_1$. Therefore it follows that
\begin{align*}
    M= \lim_{k \to \infty} v(x_{\epsilon_k})-u_{\epsilon_k}(x_{\epsilon_k}) \leq C_2 -C_1 \leq 0,
\end{align*}
where we have used that $\lim_{|x| \to \infty} u(x) = C_1$ and $\lim_{|x| \to \infty} v(x) = C_2$. This is obviously a contradiction since we have assumed that $M >0$. Thus, any sequence of points $x_{\epsilon} \in K_{\epsilon}$ has a subsequence that converges to some $x_0$. We therefore see that
 $$
 v(x_0) \geq \limsup_{x_{\epsilon} \to x_0} v(x_{\epsilon}) \geq \liminf_{x_{\epsilon} \to x_0} u(x_{\epsilon}) + M_{\epsilon} \geq u(x_0) +M,
 $$
where we have used that $v$ is upper semincontinuous in the first inequality and that $u$ is lower semicontinuous in the last inequality. Thus, $v(x_0)-u(x_0) \geq M$ and from the definition of $M$ it follows that $v(x_0)-u(x_0) = M$ so we conclude that $x_0 \in M$. Consequently, since $K_0 \subset \subset K(\beta)$ it follows that $K_{\epsilon} \subset \subset K(\beta)$ for $\epsilon$ small enough.

We next consider $x_{\epsilon} \in K_{\epsilon}$. Since $\underbrace{v(x_{\epsilon})-u_{\epsilon}(x_{\epsilon})}_{M_{\epsilon}} \geq v(x)-u_{\epsilon}(x)$ for all $x \in \Rn$ it follows that
$$
v(x) \leq u_{\epsilon}(x)+M_{\epsilon},
$$
for all $x \in \Rn$.
We set
\begin{align*}
 \varphi_{\epsilon} =  u_{\epsilon}+M_{\epsilon},
\end{align*}
and thus
\begin{align}
     \begin{cases}
                 v \leq  \varphi_{\epsilon}  \ \textup{ in  } \Rn \\
                 v = \varphi_{\epsilon} \ \textup{ on  }  K_{\epsilon}.
     \end{cases}
\end{align}
By the upper semicontinuity of $v$ and lower semicontinuity of $u$ there exists an $\epsilon_0>0$ and a $\tau_0 >0$ such that for $\epsilon < \epsilon_0$, and $\tau \leq \tau_0$ we have that
\begin{equation}
\label{eq:blahblah}
v \leq u_{\epsilon} + M_{\epsilon}/8 \ \textup{ in } \ \Rn \setminus \Omega_{\tau},   
\end{equation}
where
$$
\Omega_{\tau}:= \{ x \in \Omega, d(x, \partial \Omega)> \tau \}. 
$$
This can be seen as follows: If \eqref{eq:blahblah} does not hold, then there exists a sequence $\epsilon_k \to 0$ as $k \to \infty$, such that for every $\epsilon_k$ there exists a sequence $\tau_i:=\tau_i(\epsilon_k) \to 0 $ as $i \to \infty$, and a sequence of points $x_i(\epsilon_k) \in \Rn \setminus \Omega_{\tau_i}$ such that \eqref{eq:blahblah} does not hold i.e., 
$$
v(x_i(\epsilon_k)) > u_{\epsilon_k}(x_i(\epsilon_k)) + M_{\epsilon_k}/8.
$$
We form the diagonal sequence $x_j:=x_j(\epsilon_j)$. If the sequence $x_j$ is such that $|x_j| \leq C$, then there exists a subsequence that we will continue to denote by $x_j$ such that $x_j \to \tilde{x} \in \Rn \setminus \Omega$. We then see that
\begin{align*}
    v(\tilde{x}) & \geq \limsup_{j\to \infty} v(x_j) > \liminf_{j \to \infty} (u_{\epsilon_j}(x_j) + M_{\epsilon_j}/8) \geq \liminf_{j \to \infty} (u(x_j^*) + M_{\epsilon_j}/8) \\
    &\geq u(\tilde{x}) + M/8,
\end{align*}
where we have used that $v$ is upper semicontinuous in the first inequality and that $u$ is lower semicontinuous in the last inequality. Thus, $v(\tilde{x}) > u(\tilde{x})$ which is clearly a contradiction since $u(x) \geq v(x)$ on $\Rn \setminus \Omega$. We next assume that the sequence of points $|x_j| \to \infty$ as $j \to \infty$ such that
\begin{align*}
 \limsup_{j\to \infty} v(x_j) & > \liminf_{j \to \infty} u_{\epsilon_j}(x_j) + M_{\epsilon_j}/8 \geq \liminf_{j \to \infty} u(x_j^*) + M_{\epsilon_j}/8 .
\end{align*}
Since $\lim_{|x| \to \infty} u(x) = C_1$ and $\lim_{|x| \to \infty} v(x) = C_2$ and $u_{\epsilon_j} \to u$ it follows that
$$
C_2 \geq C_1 + M/8,
$$
which is clearly a contradiction since $C_2 \leq C_1$ and $M>0$. This proves equation \eqref{eq:blahblah}. 

We choose $\tau$ so small that $K(\beta) \subset \subset \Omega_{\tau}$ and take $\epsilon$ such that $r(\epsilon) < \tau$ so that $\partial \Omega_{\epsilon} \subset V$ where $V = \Rn \setminus \Omega_{\tau}$. Let $\eta_{\epsilon}(x) \in C^{\infty}(\Rn)$ be such that
\begin{align}
     \begin{cases}
                   \eta_{\epsilon} = 1 & \textup{on  } \ \Rn \setminus \Omega_{\epsilon} \\
                    0 \leq \eta_{\epsilon} \leq 1 & \textup{on  } \Omega_{\epsilon} \cap V \\
                    \eta_{\epsilon} =0  & \textup{on  } \ \Omega_{\epsilon} \setminus V = \Omega_{\tau}
     \end{cases}
\end{align}
and note that
$$
\Omega_{\epsilon} \cap V = \{x \in \Omega: r(\epsilon) < d(x, \partial \Omega) < \tau \}.
$$
Next we define
\begin{equation*}
     \tilde{\varphi}_{\epsilon} = 
                \varphi_{\epsilon} -\frac{M_{\epsilon}}{4} \eta_{\epsilon}.
\end{equation*}
and note that $\tilde{\varphi}_{\epsilon}= \varphi_{\epsilon}$ on $\Omega_{\epsilon} \setminus V$. Furthermore, we recall that 
$$
K_{\epsilon} \subset \subset K(\beta) \subset \subset \Omega_{\tau} = \Omega_{\epsilon} \setminus V,
$$
so $\tilde{\varphi}_{\epsilon} = \varphi_{\epsilon}$ on $K_{\epsilon}$. From equation \eqref{eq:blahblah} we know that
$$
v(x) \leq u_{\epsilon}(x) + M_{\epsilon}/8=\varphi_{\epsilon}(x)-\frac{7 M_{\epsilon}}{8} < \tilde{\varphi}_{\epsilon}(x),
$$
when $x \in V$. Therefore
\begin{align}
     \begin{cases}
                   v \leq \tilde{\varphi}_{\epsilon} & \textup{ in  } \Rn \\
                   v= \tilde{\varphi}_{\epsilon} & \textup{on  } \ K_{\epsilon}
     \end{cases}
\end{align}
and we conclude that $\tilde{\varphi}_{\epsilon}$ is a test function for $v$ at $x_{\epsilon} \in K_{\epsilon}$, i.e.,
\begin{equation}
\label{eq:contradiction}
\Linf \tilde{\varphi}_{\epsilon}(x_{\epsilon}) \geq f(x_{\epsilon}).    
\end{equation}
Next, we note that
\begin{align}
    \Lpos \tilde{\varphi}_{\epsilon}(x_{\epsilon}) &= \sup_{y \in \Rn} \frac{\tilde{\varphi}_{\epsilon}(y)-\tilde{\varphi}_{\epsilon}(x_{\epsilon})}{|y-x_{\epsilon}|^{\alpha}} = \sup_{y \in \Rn} \frac{\varphi_{\epsilon}(y)-\varphi_{\epsilon}(x_{\epsilon})-\eta_{\epsilon} (y)M_{\epsilon}/4}{|y-x_{\epsilon}|^{\alpha}} \nonumber \\
    & \leq \Lpos \varphi_{\epsilon}(x_{\epsilon}). \label{eq:Lplus}
\end{align}
We recall that $u_{\epsilon}(x)$ is a supersolution so by definition $\varphi_{\epsilon}(x)$ is a supersolution. Furthermore, $\tilde{\varphi}_{\epsilon}(x)$ touches $\varphi_{\epsilon}(x)$ from below at $x_{\epsilon} \in K_{\epsilon}$ and therefore
\begin{align}
    \Lneg \tilde{\varphi}_{\epsilon}(x_{\epsilon}) &= \Lneg \left(\varphi_{\epsilon}-\frac{M_{\epsilon}}{4} \eta_{\epsilon} \right)(x_{\epsilon}) \nonumber \\  
    &= \inf_{y \in \Rn} \frac{\varphi_{\epsilon}(y)-\frac{M_{\epsilon}}{4}\eta_{\epsilon} (y)-\varphi_{\epsilon}(x_{\epsilon})+\frac{M_{\epsilon}}{4}\eta_{\epsilon} (x_{\epsilon})}{|y-x_{\epsilon}|^{\alpha}}\nonumber   \\
    &=\inf_{y \in \Rn} \frac{\varphi_{\epsilon}(y)-\varphi_{\epsilon}(x_{\epsilon})-\frac{M_{\epsilon}}{4}\eta_{\epsilon} (y)}{|y-x_{\epsilon}|^{\alpha}} \nonumber \\
    &\leq  \inf_{y \in \Rn \setminus \Omega_{\epsilon}} \frac{\varphi_{\epsilon}(y)-\varphi_{\epsilon}(x_{\epsilon})-\frac{M_{\epsilon}}{4}}{|y-x_{\epsilon}|^{\alpha}}. \nonumber  
\end{align}
 If $\Lneg \varphi_{\epsilon}(x_{\epsilon})$ is attained for some $y_{\epsilon} \in \Rn$ it follows that
\begin{equation}\label{eq:wantthis}
   \Lneg \tilde{\varphi}_{\epsilon}(x_{\epsilon}) \leq \inf_{y \in \Rn \setminus \Omega_{\epsilon}} \frac{\varphi_{\epsilon}(y)-\varphi_{\epsilon}(x_{\epsilon})-\frac{M_{\epsilon}}{4}}{|y-x_{\epsilon}|^{\alpha}} \leq  \Lneg \varphi_{\epsilon}(x_{\epsilon})-\delta_{\epsilon},
\end{equation}
where $\delta_{\epsilon} = \frac{M_{\epsilon}}{4|y_{\epsilon}-x_{\epsilon}|^{\alpha}}$. Note that we in the second inequality used that
$$
\inf_{y \in \Rn} \frac{\varphi_{\epsilon}(y)-\varphi_{\epsilon}(x_{\epsilon})}{|y-x_{\epsilon}|^{\alpha}} = \inf_{y \in \Rn \setminus \Omega_{\epsilon}} \frac{\varphi_{\epsilon}(y)-\varphi_{\epsilon}(x_{\epsilon})}{|y-x_{\epsilon}|^{\alpha}}.
$$
This is due to Lemma \ref{lem:Linfcont} since $u_{\epsilon}$ is a supersolution in $\Omega_{\epsilon}$. If we can show that there exists some $\tilde{\delta}>0$ such that
\begin{equation}
\label{eq:delta_bound}
    \delta_{\epsilon} = \frac{M_{\epsilon}}{4|y_{\epsilon}-x_{\epsilon}|^{\alpha}} > \tilde{\delta},
\end{equation}
for $x_{\epsilon} \in K_{\epsilon}$ and arbitrarily small values of $\epsilon$ we are done. If this is not true we either have that $|y_{\epsilon}|=\infty$ for all $\epsilon$ or there exists a sequence $x_{\epsilon}$ such that the corresponding sequence $y_{\epsilon} \to \infty$ as $\epsilon \to 0$. We start with the case $|y_{\epsilon}|=\infty$ for all $\epsilon$ less than some $\epsilon_0$. If this is true, then
$$
 \inf_{y \in \Rn \setminus \Omega_{\epsilon}} \frac{u_{\epsilon}(y)-u_{\epsilon}(x_{\epsilon})}{|y-x_{\epsilon}|^{\alpha}} =0.
$$
But this implies that $u_{\epsilon}(x_{\epsilon}) \geq u_{\epsilon}(y)$ for all $y \in \Rn \setminus \Omega_{\epsilon}$ and from Lemma \ref{lem:strongmaxprinc} it follows that $u_{\epsilon}$ is a constant function for all $\epsilon < \epsilon_0$. We deal with this case in \eqref{eq:ueps_const}.

We move on to the second case and therefore assume that \eqref{eq:delta_bound} does not hold and that there exists a sequence $x_{\epsilon}$ such that the corresponding sequence $y_{\epsilon} \to \infty$ as $\epsilon \to 0$. Since $u$ is bounded it clearly follows that 
\begin{equation}
\label{eq:liminfu}
\lim_{\epsilon \to 0} \inf_{y \in \Rn \setminus \Omega_{\epsilon}} \frac{\varphi_{\epsilon}(y)-\varphi_{\epsilon}(x_{\epsilon})}{|y-x_{\epsilon}|^{\alpha}} =0.
\end{equation} Furthermore, we recall that the sequence $\{x_{\epsilon} \}$ is such that (for a subsequence) $x_{\epsilon} \rightarrow x_0$ for some $x_0 \in K_0$. We are going to show that this implies that 
\begin{equation}
\label{eq:infu_limit}
    \inf_{y \in \Rn \setminus \Omega} \frac{u(y)-u(x_0)}{|y-x_0|^{\alpha}} =0.
\end{equation}
We therefore assume that \eqref{eq:infu_limit} does not hold which implies that the left hand side is strictly less than zero, i.e., there exists some $y_0 \in \Rn \setminus \Omega$ and some $a>0$ such that
\begin{equation*}
    \frac{u(y_0)-u(x_0)}{|y_0-x_0|^{\alpha}} = -a < 0.
\end{equation*}
Due to the upper semicontinuity of $v$ it follows that
\begin{equation*}
\limsup_{\epsilon \to 0} u_{\epsilon}(x_{\epsilon}) =\limsup_{\epsilon \to 0} (v(x_{\epsilon}) - M_{\epsilon}) \leq v(x_0) -M = u(x_0),
\end{equation*}
 and due to the lower semicontinuity of $u$, and the fact that $x_{\epsilon}^* \to x_0$ as $\epsilon \to 0$, since $|x_{\epsilon}^*-x_{\epsilon}| \to 0$, we obtain
\begin{equation*}
 \liminf_{\epsilon \to 0} u_{\epsilon}(x_{\epsilon}) =\liminf_{\epsilon \to 0} \left(u(x^*_{\epsilon}) + \frac{|x^*_{\epsilon}-x_{\epsilon}|^2}{2 \epsilon} \right)   \geq \liminf_{\epsilon \to 0} u(x_{\epsilon}^*) \geq u(x_0).
\end{equation*}
In conclusion, 
$$
\lim_{\epsilon \to 0} u_{\epsilon} (x_{ \epsilon}) = u(x_0).
$$
Thus, for a fixed $y \in \Rn \setminus \Omega$ we see that
\begin{align*}
\label{eq:pointwiseconv}
    \frac{u_{\epsilon}(y)-u_{\epsilon}(x_{\epsilon})}{|y-x_{\epsilon}|^{\alpha}} &= \frac{u(y)-u(x_0)}{|y-x_0|^{\alpha}} \cdot \frac{|y-x_0|^{\alpha}}{|y-x_{\epsilon}|^{\alpha}} + \frac{u_{\epsilon}(y)-u(y) + u(x_0)-u_{\epsilon}(x_{\epsilon})}{|y-x_{\epsilon}|^{\alpha}} \nonumber\\
    &\to \frac{u(y)-u(x_0)}{|y-x_0|^{\alpha}}, 
\end{align*}
as $\epsilon \to 0$. It therefore follows that there exists $\epsilon_0 >0$ such that for every $\epsilon \leq \epsilon_0$ we have that
\begin{equation*}
   \left| \frac{u_{\epsilon}(y_0)-u_{\epsilon}(x_{\epsilon})}{|y_0-x_{\epsilon}|^{\alpha}}-\frac{u(y_0)-u(x_0)}{|y_0-x_0|^{\alpha}} \right| < \frac{a}{2},
\end{equation*}
which implies that 
\begin{equation*}
    \frac{u_{\epsilon}(y_0)-u_{\epsilon}(x_{\epsilon})}{|y_0-x_{\epsilon}|^{\alpha}} \leq -\frac{a}{2} <0,
\end{equation*}
for $\epsilon \leq \epsilon_0$. But under the assumption that $y_{\epsilon} \to \infty$ this contradicts \eqref{eq:liminfu} and therefore \eqref{eq:infu_limit} must hold. But this implies that $u(y) \geq u(x_0)$ for all $y \in \Rn \setminus \Omega$ and from Lemma \ref{lem:strongmaxprinc} we deduce that $u$ must be a constant function.

We have therefore proved that for a sequence $x_{\epsilon} \in K_{\epsilon}$ either $y_{\epsilon}$ can not escape to infinity when $\epsilon \rightarrow 0$ which implies \eqref{eq:delta_bound} or the function $u$ is a constant. But if $u$ is constant, the infimal convolution $u_{\epsilon}$ is also constant and
\begin{equation} \label{eq:ueps_const}
\inf_{y \in \Rn \setminus \Omega_{\epsilon}} \frac{\varphi_{\epsilon}(y) - \varphi_{\epsilon}(x_{\epsilon})-M_{\epsilon}/4}{|y-x_{\epsilon}|^{\alpha}} = \inf_{y \in \Rn \setminus \Omega_{\epsilon}} \frac{-M_{\epsilon}/4}{|y-x_{\epsilon}|^{\alpha}} = -\frac{M_{\epsilon}/4}{\inf_{y \in \Rn \setminus \Omega_{\epsilon}} |y-x_{\epsilon}|^{\alpha}}.
\end{equation}
Therefore, in this case \eqref{eq:wantthis} holds with
$$
\delta_{\epsilon} = \frac{M_{\epsilon}/4}{\inf_{y \in \Rn \setminus \Omega_{\epsilon}} |y-x_{\epsilon}|^{\alpha}} \to  \frac{M/4}{\inf_{y \in \Rn \setminus \Omega} |y-x_0|^{\alpha}} >0,
$$
for all $\epsilon$ less than some $\epsilon_0$.

In conclusion, from \eqref{eq:Lplus} and \eqref{eq:wantthis} it follows that for $\epsilon$ small enough, we can find a $\delta > 0$ such that
\begin{align}
    \Linf \tilde{\varphi}_{\epsilon}(x_{\epsilon}) &\leq \Linf \varphi_{\epsilon}(x_{\epsilon})- \delta  =\Linf u_{\epsilon}(x_{\epsilon}) - \delta  \label{eq:rhs_cont}
     \leq  \tilde{f}^{\epsilon}(x_{\epsilon})-\delta  \\
    & =f(x_{\epsilon}^*) -\delta  \leq \sup_{y \in B_{r(\epsilon)}(x_{\epsilon})} f(y)-\delta \leq f(x_{\epsilon})-\tilde{\delta}, \nonumber
\end{align}
for some $\delta \geq \tilde{\delta}>0$. Note that we have used the continuity of the function $f$ in the last inequality. Therefore,
$$
\Linf \tilde{\varphi}_{\epsilon}(x_{\epsilon}) < f(x_{\epsilon}),
$$
and thus, $\tilde{\varphi}$ is a strict supersolution. This, however, contradicts \eqref{eq:contradiction} which proves the theorem under the assumption that $u$ is bounded in $\Omega$. 

We are now going to remove that assumption. Since $u$ is bounded on $\Rn \setminus \Omega$, i.e., $|u(x)|\leq C$ for $x \in \Rn \setminus \Omega$, it follows from Lemma \ref{lem:strongmaxprinc} that $u$ is bounded from below in $\Omega$. We next choose a constant $K$ so large that $K \geq C$ and 
\begin{equation}\label{eq:infext}
  \inf_{y \in \Rn \setminus \Omega} \frac{u(y)-K}{|y-x_0|^{\alpha}} \leq f(x_0),  
\end{equation}
for all $x_0 \in \Omega$. This can be done since $u$ is bounded on $\Rn \setminus \Omega$ and $|f(x)|$ is bounded on $\Omega$. Indeed, we consider any $y \in \Rn \setminus \Omega$ and let $d_y$ be the maximum distance from $y$ to $\Omega$, i.e., $d_y = \sup_{x \in \Omega} |y-x|$. Then we can for instance choose $K$ such that
$$
\frac{u(y)-K}{d^{\alpha}_y} \leq \inf_{x \in \Omega} f(x).
$$
We next define
\begin{align}\label{def:utilde}
   \tilde{u}(x) =   \begin{cases}
                    \min\{u, K\} & \textup{on  } \ \Omega \\
                    u    & \textup{on  } \ \Rn \setminus \Omega.
     \end{cases}
\end{align}
The function $\tilde{u}$ is lower semicontinuous since $u$ is, the minimum of $u$ and $K$ is and $K \geq u$ on $\partial \Omega$. Furthermore, due to the choice of $K$ in \eqref{eq:infext}, $\tilde{u}$ is a supersolution to \eqref{eq:Luf2} with $f$ on the right hand side. Since it is bounded we can use the proof above to see that $v \leq \tilde{u}$ and therefore $v \leq u$. This concludes the proof of the theorem.
\end{proof}

Since a viscosity solution to \eqref{eq:Luf} is both a viscosity supersolution and a viscosity subsolution, the uniqueness follows immediately. We formulate this as a corollary.

\begin{cor}
    Let $u$ be a viscosity solution to \eqref{eq:Luf} where $0 < \alpha < 1$, and $f \leq 0$ is bounded and continuous in $\Omega$. Furthermore, let $u(x)$ be bounded on $ \Rn \setminus \Omega$. In addition, we assume that $u(x)$ has a limit, as $x$ tends to infinity, in the sense that $\lim_{|x| \to \infty} u(x) = C_1$. Then $u$ is unique.
\end{cor}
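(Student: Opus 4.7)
The plan is to deduce uniqueness directly from the comparison principle established in Theorem~\ref{thm:comparison}, applied symmetrically. Suppose that $u_1$ and $u_2$ are two viscosity solutions to \eqref{eq:Luf} satisfying the hypotheses of the corollary, and assume that they share the same data on $\R^n\setminus\Omega$, in particular the same limit $C_1$ at infinity (this is the natural meaning of ``unique'' here, since without any prescription outside $\Omega$ one can clearly produce many solutions). By Definition~\ref{def:viscositySol}, each $u_i$ is simultaneously a viscosity supersolution and a viscosity subsolution to \eqref{eq:Luf2}.

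First I would apply Theorem~\ref{thm:comparison} with the choice $u=u_1$ (viewed as a supersolution) and $v=u_2$ (viewed as a subsolution). All hypotheses are in place: $0<\alpha<1$; $f\le 0$ is bounded and continuous on $\Omega$; $u_1$ is bounded on $\R^n\setminus\Omega$; $u_1=u_2$ on $\R^n\setminus\Omega$ so in particular $u_1\ge u_2$ there; and both functions have limits at infinity (equal to $C_1$, but the theorem only requires the existence of some $C_1$ and $C_2$). The conclusion of the theorem gives $u_1\ge u_2$ throughout $\Omega$. Then I would repeat the argument with the roles of $u_1$ and $u_2$ reversed, taking $u=u_2$ as supersolution and $v=u_1$ as subsolution; the same hypotheses are satisfied, and Theorem~\ref{thm:comparison} yields $u_2\ge u_1$ in $\Omega$.

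Combining the two inequalities gives $u_1=u_2$ in $\Omega$, and by assumption they already agree on $\R^n\setminus\Omega$, so $u_1\equiv u_2$ in $\R^n$. There is essentially no obstacle beyond verifying the hypotheses of Theorem~\ref{thm:comparison}; the only mild subtlety is noting that continuity of a viscosity solution (built into the definition) together with the limit condition automatically makes it bounded on $\R^n\setminus\Omega$ if one wishes to phrase the hypothesis minimally, and that the same $f\le 0$ serves both for the super- and the subsolution inequality. This completes the deduction of uniqueness from the comparison principle.
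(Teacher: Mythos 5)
Your argument is correct and is exactly the paper's intended (one-line) proof: a viscosity solution is both a super- and a subsolution, so applying Theorem~\ref{thm:comparison} twice with the roles of $u_1$ and $u_2$ swapped gives $u_1 = u_2$. No further comment is needed.
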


\bibliographystyle{abbrv}
\bibliography{bibliography}

\end{document}